\newcommand{\GF}{{\mathbb F}}
\newcommand{\FF}{{\mathbb F}}
\newcommand{\CC}{{\mathbb C}}
\newcommand{\Z}{{\mathbb Z}}
\newcommand{\ZZ}{{\mathbb Z}}
\newcommand{\R}{{\mathbb R}}
\newcommand{\RR}{{\mathbb R}}
\newcommand{\cL}{\mathcal{L}}
\newcommand{\wt}{{\rm wt}}
\newcommand{\supp}{{\rm supp}}
\DeclareMathOperator{\Harm}{Harm}
\newtheorem{Thm}{Theorem}[section]
\newtheorem{thm}{Theorem}[section]
\newtheorem{Lem}[Thm]{Lemma}
\newtheorem{lem}[Thm]{Lemma}
\newtheorem{cor}[thm]{Corollary}
\newtheorem{prop}[Thm]{Proposition}
\theoremstyle{definition}
\newtheorem{Def}[Thm]{Definition}
\newtheorem{df}[Thm]{Definition}
\newtheorem{rem}[Thm]{Remark}
\newtheorem{Ex}[Thm]{Example}
\numberwithin{equation}{section}
\title{A note on Assmus--Mattson type theorems}
\author{Tsuyoshi Miezaki*}
\thanks{*Corresponding author}
\address{Faculty of Education, University of the Ryukyus, Okinawa  
903--0213, Japan}
\email{miezaki@edu.u-ryukyu.ac.jp}
\author{Akihiro Munemasa}
\address{Graduate School of Information Sciences, Tohoku University, Sendai
980--8579, Japan}
\email{munemasa@math.is.tohoku.ac.jp}
\author{Hiroyuki Nakasora}
\address{Institute for Promotion of Higher Education, Kobe Gakuin University, Kobe
651--2180, Japan}
\email{nakasora@ge.kobegakuin.ac.jp}
\date{February 25, 2020}
\keywords{self-dual code, combinatorial $t$-design, 
Assmus--Mattson theorem, harmonic weight enumerator, 
unimodular lattice, spherical $t$-design, Venkov's theorem, 
spherical theta series}
\subjclass[2010]{Primary 94B05; Secondary 05B05}
\begin{document}
\begin{abstract}
In the present paper, 
we give Assmus--Mattson type theorems for 
codes and lattices. 
We show that 
a {binary} doubly even self-dual code 
of length $24m$ 
with minimum weight $4m$ 
provides a combinatorial $1$-design and
an even unimodular lattice of rank $24m$ with minimum norm $2m$ 
provides a spherical $3$-design. We remark that 
some of such codes and lattices give $t$-designs for 
higher $t$. 
As a corollary, 
we give some restrictions on the 
weight enumerators of {binary} doubly even self-dual codes 
of length $24m$ with minimum weight $4m$. 
Ternary and quaternary analogues are also given. 
\end{abstract}
\maketitle


\section{Introduction}

Let $C$ be a code over $\FF_q$ and 
$C_\ell:=\{c\in C\mid \wt(c)=\ell\}$. 
Let $L \subset \R^{n}$ be a lattice and 
$L_\ell:=\{x\in L\mid (x,x)=\ell\}$. 
In this paper, we call $C_\ell$ (resp.\ $L_\ell$) a shell of the code $C$ 
(resp.\ lattice $L$)  whenever it is non-empty.

Shells of extremal self-dual codes are known to support combinatorial designs
by the Assmus--Mattson theorem.
More precisely, the set $\mathcal{B}(C_\ell):=\{\supp(x)\mid x\in C_\ell\}$
forms the set of blocks of a combinatorial design. Note that
$\mathcal{B}(C_\ell)$ is not a multiset, so combinatorial designs 
in this papers
are always without repeated blocks.
Similarly, shells of extremal even unimodular lattices are known to give
spherical designs by a theorem of Venkov.
In the present paper, 
we give analogues of these theorems with relaxed assumptions; the minimum weight
of a self-dual code is allowed to be slightly smaller than the extremal case,
the minimum norm of an even unimodular lattice is allowed to be smaller by $2$
than the extremal case. The conclusions we obtain are necessarily weaker than
the extremal cases, but still they are nontrivial, and give a restriction
on weight enumerators for the binary case.

To explain our results, 
we review some results on codes and lattices. 
Let $C$ be a {binary} doubly even self-dual code of length $n$. 
Then we have the following bound on the minimum weight of 
$C$ \cite{MS}:
\begin{align}\label{ine:code-ext}
\min(C)\leq 4\left\lfloor\frac{n}{24}\right\rfloor+4. 
\end{align}
We say that $C$ meeting
the bound (\ref{ine:code-ext}) with equality is extremal. 
Let $C$ be an extremal code of length $n$. 
Identifying codewords with their support, 
$C_\ell$
forms a combinatorial $t$-design, where
\[
t=\begin{cases}
5 &\text{if $n \equiv 0 \pmod{24}$},\\
3 &\text{if $n \equiv 8 \pmod{24}$},\\
1 &\text{if $n \equiv 16 \pmod{24}$},
\end{cases}
\]
provided $C_\ell\neq \emptyset$ \cite{assmus-mattson}. 
Ternary and quaternary analogues of this fact 
were also given in \cite{assmus-mattson}. 
Let $C$ be a ternary or quaternary self-dual code of length $n$. 
Then we have the following bound on the minimum weight of 
$C$ \cite{MOSW,MS}:
\begin{equation}\label{34}
\min(C)\leq 
\begin{cases}
3\left\lfloor\frac{n}{12}\right\rfloor+3
&\text{if $C$ is ternary,}\\
2\left\lfloor\frac{n}{6}\right\rfloor+2
&\text{if $C$ is quaternary.}
\end{cases}
\end{equation}
We say that $C$ meeting
the bound (\ref{34}) with equality is extremal. 
Let $C$ be an extremal code of length $n$
and 
$w$ be the largest integer satisfying
\[
w-\left\lfloor\frac{w+q-2}{q-1}\right\rfloor<d. 
\]
Then 
for $\ell\leq w$, 
$C_\ell$
forms a combinatorial $t$-design, where
\[t=\begin{cases}
5 &\text{if $n \equiv 0 \pmod{12}$,}\\
3 &\text{if $n \equiv 4 \pmod{12}$,}\\
1 &\text{if $n \equiv 8 \pmod{12}$,}
\end{cases}\]
if $C$ is ternary, and
\[t=\begin{cases}
5 &\text{if $n \equiv 0 \pmod{6}$,}\\
3 &\text{if $n \equiv 2 \pmod{6}$,}\\
1 &\text{if $n \equiv 4 \pmod{6}$,}
\end{cases}
\]
if $C$ is quaternary, 
provided $C_\ell\neq \emptyset$ \cite{assmus-mattson}.

Let $L$ be an even unimodular lattices of rank $n$. 
Then we have the following bound on the minimum norm of 
$L$ \cite{MOS75}:
\begin{align}\label{ine:lattice-ext}
\min(L)\leq 2\left\lfloor\frac{n}{24}\right\rfloor+2. 
\end{align}
We say that $L$ meeting
the bound (\ref{ine:lattice-ext}) with equality is extremal. 
Let $L$ be an extremal lattice of length $n$. 
After normalization,
$L_\ell$
forms a spherical $t$-design, where
\[t=
\begin{cases}
11 &\text{if $n \equiv 0 \pmod{24}$,}\\
7  &\text{if $n \equiv 8 \pmod{24}$,}\\
3  &\text{if $n \equiv 16 \pmod{24}$,}
\end{cases}
\]
provided $L_\ell\neq \emptyset$ \cite{{Venkov1},{Venkov2}} (see also \cite{Pache}). 

The main results of the present paper is 
to give an analogue of these results for non-extremal codes and lattices,
as follows: 

\begin{thm}\label{thm:AM}
\begin{enumerate}
\item [{\rm (1)}]
Let $C$ be a {binary} doubly even self-dual code of length $24m$ 
with minimum weight $4m$. Then every shell of $C$ is a 
combinatorial $1$-design.

\item [{\rm (2)}]
Let $C$ be a ternary self-dual code of length $12m$ 
with minimum weight $3m$. 
Then for $\ell\leq 6m-3$, 
$C_\ell$ is a 
combinatorial $1$-design.

\item [{\rm (3)}]

Let $C$ be a quaternary self-dual code of length $6m$ 
with minimum weight $2m$. 
Then for $\ell\leq 3m-1$, 
$C_\ell$ is a 
combinatorial $1$-design.

\item [{\rm (4)}]
Let $L$ be an even unimodular lattice of rank $24m$ with minimum norm
$2m$. Then every shell of $L$ supports a spherical $3$-design.

\end{enumerate}
\end{thm}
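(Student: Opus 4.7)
The plan is to translate each design property into the vanishing of a harmonic enumerator, and then force this vanishing through a dimension count in the appropriate space of invariants or modular forms. For Part~(4), let $P$ be a harmonic polynomial of degree $d\ge 1$ on $\RR^{24m}$; the spherical theta series
\[
\theta_{L,P}(\tau) \;=\; \sum_{x \in L} P(x)\, e^{\pi i \tau (x,x)}
\]
is a cusp form of weight $12m + d$ for $SL_2(\ZZ)$. For odd $d$ it vanishes identically by the central symmetry $L = -L$, so it suffices to treat $d = 2$. Then $\theta_{L,P} \in S_{12m+2}$, and the standard dimension formula yields $\dim S_{12m+2} = m - 1$. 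The hypothesis $\min(L) = 2m$ forces $L_\ell = \emptyset$ for $0 < \ell < 2m$, so the Fourier coefficients of $\theta_{L,P}$ at $q, q^2, \ldots, q^{m-1}$ all vanish. These $m-1$ linear conditions are precisely enough to force $\theta_{L,P} \equiv 0$, and combined with the vanishing for $d \in \{1,3\}$ this gives the spherical $3$-design property on every shell.

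For Parts~(1)--(3) the spherical theta series is replaced by Bachoc's harmonic weight enumerator
\[
W_{C,f}(x,y) \;=\; \sum_{c \in C} f(c)\, x^{n - \wt(c)} y^{\wt(c)}
\]
for $f$ a discrete harmonic polynomial of degree $1$ on the appropriate alphabet. For each Gleason ring (binary doubly even, ternary, or quaternary self-dual), $W_{C,f}$ lies in a finitely generated module, freely generated over the Gleason ring by a single explicit polynomial, so its degree-$n$ component is finite-dimensional. The minimum weight hypothesis forces the coefficients of $W_{C,f}$ at weights below $\min(C)$ to vanish, which is just enough to collapse the relevant subspace. In the binary case, the complementation symmetry furnished by $\mathbf{1}\in C$ yields $W_{C,f}(x,y) = -W_{C,f}(y,x)$ and forces every coefficient to vanish identically; in the ternary and quaternary cases this extra symmetry is absent, and so vanishing can only be guaranteed in the stated range $\ell \le 6m-3$, respectively $\ell \le 3m-1$, which is precisely where the module argument gives enough constraints.

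The main obstacle is the bookkeeping for the code cases: one must identify the relevant module of degree-$1$ harmonic weight enumerators for each alphabet, compute its dimension in degree $n$, and check that the vanishing conditions from $\min(C)$ match, or exceed within the prescribed range of $\ell$, that dimension. The lattice case, by contrast, reduces cleanly to the identity $\dim S_{12m+2} = m - 1$ together with the fact that a cusp form on $SL_2(\ZZ)$ whose first $\dim S_k$ Fourier coefficients vanish is identically zero.
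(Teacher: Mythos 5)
Your part (4) is essentially the paper's proof in different clothing: the paper invokes its Lemma on weighted theta series to place $\vartheta_{L,P}$, $P\in\Harm_2(\RR^{24m})$, in $E_6\CC[E_4,\Delta]$ and observes that the forced divisibility by $\Delta^m$ is incompatible with the weight $12m+2$, while you phrase the same fact as $\dim S_{12m+2}=m-1$ together with the vanishing of the first $m-1$ Fourier coefficients. Both are correct and interchangeable.

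The code cases are where the proposal breaks down. First, the binary shortcut is invalid: it is true that $\mathbf{1}\in C$ and $f\in\Harm_1$ give $W_{C,f}(x,y)=-W_{C,f}(y,x)$, but antisymmetry under swapping $x$ and $y$ does not force a polynomial to vanish ($xy(x^4-y^4)$ is a counterexample), and in this setting it carries no information at all, since $P_8$ and $P_{24}$ are symmetric while $P_{18}$ (hence $P_{30}=P_{12}P_{18}$) is antisymmetric, so every element of the module $xyP_{30}\CC[P_8,P_{24}]$ is already antisymmetric. The step you defer as ``bookkeeping'' is in fact the entire proof, and it is a valuation-plus-degree argument rather than a dimension match: since $\min(C)=4m$, the polynomial $W_{C,f}=xyP_{30}Q$ is divisible by $y^{4m}$, hence $Q\in\CC[P_8,P_{24}]$ is divisible by $y^{4m-4}$; because $P_8$ is nonzero at $y=0$ while $P_{24}$ has $y$-valuation $4$, this forces $P_{24}^{m-1}\mid Q$, and then a nonzero $Q$ would give $\deg W_{C,f}\ge 32+24(m-1)=24m+8>24m$, so $Q=0$. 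The ternary and quaternary cases run identically with $(xyp_{14},g_{12})$ and $(xyq_6,h_6)$ in place of $(xyP_{30},P_{24})$. Finally, your explanation of the ranges $\ell\le 6m-3$ and $\ell\le 3m-1$ is mistaken: $W_{C,f}$ vanishes identically in the ternary and quaternary cases as well, and the restriction on $\ell$ comes not from the module supplying ``fewer constraints'' at large weights but from Delsarte's criterion (the $w_0$ condition in the paper's Lemma on designs), which for $q>2$ only converts the vanishing of $\sum_{\wt(u)=i}\widetilde{f}(u)$ into a design statement for $d\le i\le w_0$.
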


\begin{rem}
Theorem~\ref{thm:AM} (1) and (4) were firstly obtained 
by B.~Venkov and the second named author using modular forms.
We remark that the proof of Theorem \ref{thm:AM} (1)
in the present paper is
different from their original proof. 
\end{rem}

As an application of Theorem~\ref{thm:AM},
we give some restrictions on the 
weight enumerators of self-dual codes. 

\begin{cor}\label{cor:res}
\begin{enumerate}
\item [{\rm (1)}]
Let $C$ be a {binary} doubly even self-dual code of length $24m$ 
with minimum weight $4m$. 
Then the coefficient of $x^{24m-4m}y^{4m}$ 
in the weight enumerator of $C$ 
is divisible by $6$. 

\item [{\rm (2)}]
Let $C$ be a ternary self-dual code of length $12m$ 
with minimum weight $3m$. 
Then the coefficient of $x^{12m-3m}y^{3m}$
in the weight enumerator of $C$ 
is divisible by $4$. 

\item [{\rm (3)}]

Let $C$ be a quaternary self-dual code of length $6m$ 
with minimum weight $2m$. 
Then the coefficient of $x^{6m-2m}y^{2m}$
in the weight enumerator of $C$ 
is divisible by $3$. 
\end{enumerate}
\end{cor}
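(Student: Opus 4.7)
The plan is to apply Theorem~\ref{thm:AM} to the minimum-weight shell of $C$ in each case, and to translate the resulting $1$-design identity into the desired divisibility of the weight-enumerator coefficient.

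Write $d$ for the minimum weight and $n$ for the length of $C$, so that $(n,d)=(24m,4m),(12m,3m),(6m,2m)$ in parts (1), (2), (3) respectively, and denote by $A_d$ the coefficient of $x^{n-d}y^d$ in the weight enumerator (equivalently, the number of weight-$d$ codewords). By Theorem~\ref{thm:AM} the block set $\mathcal{B}(C_d)=\{\supp(c)\mid c\in C_d\}$ forms a $1$-$(n,d,\lambda)$ design for some integer $\lambda\geq 0$, and the standard incidence count $bd=n\lambda$, with $b:=|\mathcal{B}(C_d)|$, yields $b=n\lambda/d$; inserting the three values of $(n,d)$ shows that $b$ is divisible by $6$, $4$, $3$ respectively.

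To convert this into a statement about $A_d$, I would show that every block $S\in\mathcal{B}(C_d)$ is the support of exactly $q-1$ codewords. Let $C_S:=\{c\in C\mid\supp(c)\subseteq S\}$, which is an $\FF_q$-subspace. Since $|S|=d=\min(C)$, any nonzero $c\in C_S$ has $\wt(c)\geq d=|S|$ and $\supp(c)\subseteq S$, so $\supp(c)=S$ and in particular $c_i\neq 0$ for every $i\in S$. If $\dim_{\FF_q}C_S$ were $\geq 2$, pick linearly independent $c,c'\in C_S$; for any $i\in S$ the $\FF_q$-linear map $(\alpha,\beta)\mapsto\alpha c_i+\beta c'_i$ from $\FF_q^2$ to $\FF_q$ has a nonzero kernel, producing a nonzero codeword $\alpha c+\beta c'\in C_S$ whose $i$-th coordinate vanishes, contradicting the fact that every nonzero element of $C_S$ has support $S$. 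Hence $\dim_{\FF_q}C_S=1$, so $A_d=(q-1)b$.

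Combining the two steps gives $A_d\in(q-1)(n/d)\ZZ$, i.e.\ $A_d$ is divisible by $6$ in case (1), by $8$ in case (2), and by $9$ in case (3); each of these implies the divisibility claim of Corollary~\ref{cor:res}. The whole argument is routine once Theorem~\ref{thm:AM} is in hand; the only step that requires any work is the bound $\dim_{\FF_q}C_S\leq 1$, and this relies on nothing beyond $|S|$ being the minimum weight.
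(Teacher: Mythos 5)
Your proof is correct and follows essentially the same route as the paper: apply Theorem~\ref{thm:AM} to get a $1$-design on the minimum-weight shell, then use the incidence count $bd=n\lambda$ (Lemma~\ref{lem: divisible} in the paper) to get divisibility of the number of blocks by $n/d=6,4,3$. Your extra step showing each minimum-weight support carries exactly $q-1$ codewords (via $\dim_{\FF_q}C_S=1$) is a detail the paper elides when it says the nonbinary cases are ``similar,'' and it in fact yields the slightly stronger divisibilities $8$ and $9$ in cases (2) and (3).
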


The following theorem 
gives a strengthening of Theorem~\ref{thm:AM} (1) and (4)
for some particular cases.

\begin{thm}\label{thm:rising}

\begin{enumerate}
\item [{\rm (1)}]
\label{thm:code-2}
Let $C$ be a {binary} doubly even self-dual code of length $96$ 
with minimum weight $16$. Then $C_{20}$ (also $C_{76}$) is a 
combinatorial $2$-design.

\item [{\rm (2)}]
Let $L$ be an even unimodular lattice of rank $240$ with minimum norm
$20$. Then $L_{22}$ supports a spherical $5$-design.

\end{enumerate}
\end{thm}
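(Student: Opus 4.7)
The plan is to prove (2) using spherical theta series attached to harmonic polynomials, and (1) using the analogous theory of harmonic weight enumerators for codes; in each case the desired design property reduces to the vanishing of a single coefficient, forced by a Sturm-type rigidity in a finite-dimensional space.

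For (2): given $P\in\Harm_4$ on $\R^{240}$, the spherical theta series
\[
\theta_{L,P}(\tau):=\sum_{x\in L}P(x)q^{(x,x)/2},\qquad q=e^{2\pi i\tau},
\]
is a cusp form of weight $n/2+4=124$ for $\mathrm{SL}_2(\ZZ)$. The hypothesis $\min(L)=20$ forces vanishing to order at least $10$ at the cusp, so $\theta_{L,P}/\Delta^{10}$ is a holomorphic modular form of weight $4$ and hence a multiple of $E_4$. Thus the space of weight-$124$ cusp forms vanishing to order $\geq 10$ is one-dimensional, spanned by $\Delta^{10}E_4$. Using $\Delta=q-24q^2+\cdots$ (so $\Delta^{10}=q^{10}-240q^{11}+\cdots$) and $E_4=1+240q+\cdots$, one obtains
\[
\Delta^{10}E_4 \;=\; q^{10}+(-240+240)q^{11}+O(q^{12}) \;=\; q^{10}+O(q^{12}),
\]
so $[q^{11}]\theta_{L,P}=0$, i.e.\ $\sum_{x\in L_{22}}P(x)=0$ for every $P\in\Harm_4$. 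Combined with Theorem~\ref{thm:AM}(4) (giving the $3$-design on $L_{22}$) and with $P(-x)=-P(x)$ for odd-degree harmonics, this yields the spherical $5$-design claim.

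For (1): the plan is to invoke Bachoc's harmonic weight enumerators. For a discrete harmonic polynomial $\varphi\in\Harm_k$ on the Hamming cube of length $n$, set
\[
W_{C,\varphi}(x,y):=\sum_{c\in C}\widetilde{\varphi}(\supp c)\,x^{n-\wt(c)}y^{\wt(c)},\qquad \widetilde\varphi(S):=\sum_{T\subseteq S,\,|T|=k}\varphi(T).
\]
A MacWilliams-type identity places $W_{C,\varphi}$, for $C$ doubly even self-dual, in a finitely generated graded module $\mathcal{M}_k$ over the invariant ring $\CC[P_8,d_{24}]$ (with $P_8=x^8+14x^4y^4+y^8$ and $d_{24}=x^4y^4(x^4-y^4)^4$), equipped with an explicit basis. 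The set $C_\ell$ is a combinatorial $t$-design if and only if $[y^\ell]W_{C,\varphi}=0$ for every $\varphi\in\bigcup_{k'=1}^t\Harm_{k'}$. For $k=2$ and $n=96$ I would write down a basis of the degree-$96$ part of $\mathcal{M}_2$, impose the vanishing of the $y^4,y^8,y^{12}$ coefficients forced by $\min(C)=16$ (the $y^0$-coefficient vanishes automatically because $\widetilde\varphi(\emptyset)=0$), and then verify by finite linear algebra that the $y^{20}$-coefficient vanishes on the resulting subspace; the symmetric statement for $C_{76}$ follows by the MacWilliams involution.

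The main obstacle lies on the code side: one must set up Bachoc's module $\mathcal{M}_2$ for $\Harm_2$ explicitly (i.e.\ compute the relative invariants of the doubly-even self-dual group acting on harmonic polynomials of degree $2$) and then execute the linear algebra in degree $96$. The lattice side, by contrast, is driven entirely by the elementary identity $[q^{11}](\Delta^{10}E_4)=0$ together with the standard dimension count in $S_{124}$.
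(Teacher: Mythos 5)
Your part (2) is correct and is essentially the paper's own argument: both reduce to the one\--dimensionality of the space of weight\--$124$ cusp forms vanishing to order $\geq 10$ at infinity (equivalently, $\vartheta_{L,P}\in\CC\Delta^{10}E_4$ for $P\in\Harm_4(\RR^{240})$) and to the elementary cancellation $-24\cdot 10+240=0$ in the $q^{11}$\--coefficient; the antipodality and the degree\--$2$ case from Theorem~\ref{thm:AM}(4) then upgrade this to a $5$\--design exactly as in the paper.

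Part (1), however, is only a plan, and the step you defer is the entire content of the statement. You correctly identify the framework (Lemma~\ref{thm:design} plus the module structure of Theorem~\ref{thm:invariant}(1)), but you never carry out the computation showing that the $y^{20}$\--coefficient actually vanishes, and this is not something that can be waved at: the analogous coefficient is \emph{nonzero} for every length $24m$ with $m\neq 4$, so the claim genuinely hinges on an arithmetic coincidence that must be exhibited. Concretely, what your ``finite linear algebra'' must produce is the following. For $f\in\Harm_2$ one has $W_{C,f}=(xy)^2P_{12}Q$ with $Q\in\CC[P_8,P_{24}]$ of degree $80$ (note the relative invariant $P_{12}$, which your description of the module omits); divisibility of $W_{C,f}$ by $y^{16}$ forces $Q\in\CC P_{24}^{3}P_8$, so $W_{C,f}$ is a scalar multiple of the single polynomial $(xy)^2P_8P_{12}^{7}=x^{16}y^{16}(x^8+14x^4y^4+y^8)(x^4-y^4)^{14}$, whose $y^{20}$\--coefficient is $14-\binom{14}{1}=0$. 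Only at this point does Lemma~\ref{thm:design} give that $C_{20}$ (and $C_{76}$, by the symmetry $c_\ell=c_{96-\ell}$ of this polynomial, or by complementation) is a $2$\--design. Without verifying the one\--dimensionality of the constrained space and the vanishing of that specific coefficient, part (1) is not proved.
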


On the other hand, 
we give an upper bound on the value $t$ of 
combinatorial $t$-designs 
formed by a shell of a {binary} doubly even self-dual code of length $96$ 
with minimum weight $16$. For convenience, let
\begin{equation}\label{cL}
\cL=\{\ell\in\mathbb{Z}\mid 16\leq\ell\leq80,\;\ell\equiv0\pmod{4}\}.
\end{equation}

\begin{thm}\label{thm:ub}
Let $C$ be a {binary} doubly even self-dual code of length $96$ 
with minimum weight $16$, and let $\ell\in \cL$.
Assume that $C_{\ell}$ is a combinatorial $t$-design. 
Then the following statements hold:
\begin{enumerate}
\item [{\rm (1)}]
If $t=2$ and 
$\ell\neq 20,76$,
then every shell of $C$ is a combinatorial $2$-design.
\item [{\rm (2)}]
If $t\geq3$ and 
$\ell\neq 20,48,76$,
then every shell of $C$ is a combinatorial $t$-design.
\item [{\rm (3)}]
We have
\[t\leq\begin{cases}
7&\text{if $\ell=48$,}\\
5&\text{if $\ell=20,76$.}\\
4&\text{otherwise.}
\end{cases}\]
\end{enumerate}
\end{thm}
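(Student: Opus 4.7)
The plan is to apply Bachoc's theory of harmonic weight enumerators. For a harmonic polynomial $f$ on $\FF_2^{96}$ of degree $k\ge 1$ and any binary code $C$, set
\[
W_{C,f}(x,y)=\sum_{c\in C}f(c)\,x^{96-\wt(c)}y^{\wt(c)}.
\]
A standard criterion asserts that $C_\ell$ is a combinatorial $t$-design if and only if, for every harmonic $f$ of degree $k$ with $1\le k\le t$, the coefficient of $x^{96-\ell}y^{\ell}$ in $W_{C,f}$ vanishes. When $C$ is doubly even self-dual, the harmonic MacWilliams transform identifies $W_{C,f}$ as an element of a finitely generated module $M_k$ over the Gleason ring $\CC[\phi_8,\phi_{24}]$, where $\phi_8=x^8+14x^4y^4+y^8$ is the weight enumerator of the extended Hamming code and $\phi_{24}=(xy(x^4-y^4))^4$; explicit generators of $M_k$ for small $k$ are available from Bachoc's work.

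The first step is to single out, inside the degree-$96$ component of $M_k$, the subspace $V_k$ of polynomials whose coefficient of $x^{96-\ell}y^\ell$ vanishes for $\ell\in\{4,8,12\}$, thereby enforcing the condition $\min(C)\ge 16$; the Gleason-type invariance automatically forces vanishing at $\ell\in\{84,88,92\}$ as well. A finite linear-algebra computation using the explicit generators yields $\dim V_k$ together with a basis. The structural outcomes to be established are: for $k=2$, $\dim V_2=1$ with generator $W^{(2)}$ whose coefficient at $x^{96-\ell}y^\ell$ is nonzero for all $\ell\in\cL$ except $\ell=20,76$; for $k=3,4$, $\dim V_k=1$ with generator nonzero on $\cL$ except at $\ell=20,48,76$; and for $k=5,6,7,8$, $V_k$ admits generators whose coefficient at $\ell$ is nonzero in precisely the pattern dictated by part (3).

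Granting these computations, parts (1) and (2) follow at once. If $C_\ell$ is a $t$-design and $\ell$ lies outside the relevant exceptional set, then for each $1\le k\le t$ every $W_{C,f}$ must have vanishing coefficient at $\ell$; since $\dim V_k=1$ and the generator is nonzero at $\ell$, this forces $W_{C,f}\equiv 0$ for every harmonic $f$ of degree $k$, so the coefficient vanishes at every $\ell'\in\cL$, meaning every shell of $C$ is a $t$-design. Part (3) is then proved by exhibiting, for each $\ell\in\cL$ and each degree $k$ strictly above the claimed bound, an element of $V_k$ with nonzero coefficient at $\ell$, contradicting the $(t+1)$-design property; for $\ell \ne 20,48,76$ this is used in tandem with (2), which reduces the obstruction to some lower-weight shell that cannot simultaneously be a $t$-design.

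The main obstacle is the explicit determination of the module generators of $M_k$ in degree $96$ for $k$ up to $8$, together with the verification of the coefficient-vanishing pattern at $\ell=20,48,76$. For $k\le 4$ this is a short calculation in a one-dimensional space, but for $k=5,6,7,8$ both $\dim V_k$ and the complexity of the coefficient formulas grow, so care is needed to organise the polynomial identities and identify which $\ell$ produce the accidental zeros responsible for the special behaviour at $\ell=48$ (accounting for $t\le 7$) and at $\ell=20,76$ (accounting for $t\le 5$).
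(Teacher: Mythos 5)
Your treatment of parts (1) and (2) matches the paper's: for $k=2,3,4,5$ the condition $\min(C)=16$ cuts the degree-$96$ component of Bachoc's module down to a one-dimensional space (spanned by $x^2y^2P_8P_{12}^7$, $x^3y^3P_{12}^6P_{18}$, $x^4y^4P_8^2P_{12}^6$, $x^5y^5P_8P_{12}^5P_{18}$ respectively), and the non-vanishing of the coefficient at $x^{96-\ell}y^{\ell}$ for $\ell$ outside the exceptional sets forces $W_{C,f}\equiv 0$, hence the design property propagates to every shell. That part is sound.

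Part (3) is where your argument has a genuine gap. First, for $k\ge 6$ the relevant subspace $V_k$ is no longer one-dimensional (e.g.\ for $k=6$ one gets the two-dimensional span of $(xy)^6P_{12}P_8^3P_{24}^2$ and $(xy)^6P_{12}P_{24}^3$), so ``exhibiting an element of $V_k$ with nonzero coefficient at $\ell$'' proves nothing: the actual $W_{C,f}$ could be a different element of $V_k$ whose coefficient at $\ell$ vanishes. Second, and more fundamentally, even when $\dim V_k=1$ the conclusion you can draw from a $t$-design hypothesis is only that $W_{C,f}\equiv 0$ for all $f\in\Harm_k$; this is not by itself a contradiction, since a hypothetical code could have identically vanishing harmonic weight enumerators. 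The paper obtains the upper bounds by an entirely different, arithmetic mechanism that your proposal never invokes: the weight enumerator of $C$ is known up to one integer parameter $a$ with $28086<a\le 229152$ (Dougherty--Gulliver--Harada), the number of blocks of a $t$-design must be divisible by the numerator of $\binom{n}{s}\big/\binom{k}{s}$-type fractions, and Koch's fundamental equations force certain intersection numbers to be integers. Concretely: $t\ge 5$ at a generic $\ell$ forces $C_{16}$ to be a $5$-design, whence $1273418\mid |C_{16}|=a-28086$, impossible in the given range; $t\ge 6$ at $\ell=20,76$ forces $6099002\mid|C_{20}|=3666432-16a$, again impossible; and $t\ge 8$ at $\ell=48$ makes the intersection number $x_0=8112261172015/13528+70785a/838736$ non-integral for every admissible $a$. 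Without these counting and integrality inputs your approach cannot reach the stated bounds $4$, $5$, $7$.
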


This paper is organized as follows. 
In Section~\ref{sec:pre}, 
we give definitions and some basic properties of 
self-dual codes, unimodular lattices, 
combinatorial $t$-designs and spherical $t$-designs 
used in this paper.
In Sections~\ref{sec:AM} and \ref{sec:rising}, 
we give proofs of Theorem~\ref{thm:AM} and \ref{thm:rising}, 
respectively.
In Section~\ref{sec:96}, 
we give a proof of Theorem~\ref{thm:ub}, 
give the known examples of 
binary doubly even self-dual codes of length $96$ 
with minimum weight $16$ and 
investigate their designs. 
Finally, in Section~\ref{sec:rem}, 
we give concluding remarks.

All computer calculations in this paper were done with the help of 
Magma~\cite{Magma} and Mathematica~\cite{Mathematica}. 

\section{Preliminaries}\label{sec:pre}

\subsection{Codes and combinatorial $t$-designs}

A linear code $C$ of length $n$ is a linear subspace of $\FF_{q}^{n}$. 
For $q=2$ and $q=3$, 
an inner product $({x},{y})$ on $\FF_q^n$ is given 
by
\[
(x,y)=\sum_{i=1}^nx_iy_i,
\]
where $x,y\in \FF_q^n$ with $x=(x_1,x_2,\ldots, x_n)$ and 
$y=(y_1,y_2,\ldots, y_n)$. 
The Hermitian inner product $({x},{y})$ on $\FF_4^n$ is given 
by
\[
(x,y)_H=\sum_{i=1}^nx_iy_i^2,
\]
where $x,y\in \FF_4^n$ with $x=(x_1,x_2,\ldots, x_n)$ and 
$y=(y_1,y_2,\ldots, y_n)$. 
The dual of a linear code $C$ is defined as follows: 
for $q=2$ and $q=3$, 
\[
C^{\perp}=\{{y}\in \FF_{q}^{n}\mid ({x},{y}) =0 \text{ for all }{x}\in C\},
\]
for $q=4$, 
\[
C^{\perp,H}=\{{y}\in \FF_{q}^{n}\mid ({x},{y})_H =0 \text{ for all }{x}\in C\}.
\]
A linear code $C$ is called self-dual 
if $C=C^{\perp}$ for $q=2$ and $q=3$ and 
if $C=C^{\perp,H}$ for $q=4$. 
For $x \in\FF_q^n$,
the weight $\wt(x)$ is the number of its nonzero components. 
In this paper, we consider the following self-dual codes~\cite{CS}: 
\begin{tabbing}
Doubly even: A code is defined over $\FF_{2}^{n}$ with all weights divisible by $4$,\\
Ternary: A code is defined over $\FF_{3}^{n}$ with all weights divisible by $3$,\\
Quaternary: A code is defined over $\FF_{4}^{n}$ with all weights divisible by $2$. 
\end{tabbing}

A combinatorial $t$-design 
is a pair 
$\mathcal{D}=(\Omega,\mathcal{B})$, where $\Omega$ is a set of points of 
cardinality $v$, and $\mathcal{B}$ a collection of $k$-element subsets
of $\Omega$ called blocks, with the property that any $t$ points are 
contained in precisely $\lambda$ blocks.

The support of a vector 
${x}:=(x_{1}, \dots, x_{n})$, 
$x_{i} \in \GF_{q}$ is 
the set of indices of its nonzero coordinates: 
${\rm supp} ({x}) = \{ i \mid x_{i} \neq 0 \}$\index{$supp (x)$}.
Let $\Omega:=\{1,\ldots,n\}$ and 
$\mathcal{B}(C_\ell):=\{\supp({x})\mid {x}\in C_\ell\}$. 
Then for a code $C$ of length $n$, 
we say that $C_\ell$ is a combinatorial $t$-design if 
$(\Omega,\mathcal{B}(C_\ell))$ is a combinatorial $t$-design.

\subsection{Harmonic weight enumerators}\label{sec:Har}

In this section, we extend the method of 
harmonic weight enumerators 
which were used by Bachoc~\cite{Bachoc} and 
Bannai et al.~\cite{Bannai-Koike-Shinohara-Tagami}.
For the readers convenience we quote from~\cite{Bachoc,Delsarte}
the definitions and properties of discrete harmonic functions 
(for more information the reader is referred to~\cite{Bachoc,Delsarte}).

Let $\Omega=\{1, 2,\ldots,n\}$ be a finite set (which will be the set of coordinates of the code) and 
let $X$ be the set of its subsets, while, for all $k= 0,1,\dots, n$, 
$X_{k}$ is the set of its $k$-subsets.
We denote by $\R X$ and $\R X_k$ the 
real vector spaces spanned by the elements of $X$
and $X_{k}$, respectively.
An element of $\R X_k$ is denoted by
$$f=\sum_{z\in X_k}f(z)z$$
and is identified with the real-valued function on $X_{k}$ given by 
$z \mapsto f(z)$. 

An element $f\in \R X_k$ can be extended to an element $\widetilde{f}\in \R X$ by setting, for all $u \in X$,
$$\widetilde{f}(u)=\sum_{z\in X_k, z\subset u}f(z).$$
If an element $g \in \R X$ is equal to $\widetilde{f}$ 
for some $f \in \R X_{k}$, then we say that $g$ has degree $k$. 
The differentiation $\gamma$ is the operator defined by linearity from 
$$\gamma(z) =\sum_{y\in X_{k-1},y\subset z}y$$
for all $z\in X_k$ and for all $k=0,1, \ldots n$, and $\Harm_{k}$ is the kernel of $\gamma$:
$$\Harm_k =\ker(\gamma|_{\R X_k}).$$

\begin{lem}[{{\cite[Theorem 7]{Delsarte}},{\cite[Lemma 2.5]{Tanabe}}}]\label{thm:design}
Let $C$ be a code of length $n$ with minimum weight $d$. 
Let $w_0$ be the largest integer satisfying 
\[
w_0 -\left\lfloor\frac{w_0+q-2}{q-1}\right\rfloor <d, 
\]
where, if $q = 2$, we take $w_0 := n$. 
Let $i$ be a weight of $C$ such that $d \leq i \leq w_0$. 
Then the subset of $\{1,2,\ldots,n\}$ 
which support codewords of weight $i$ in $C$ form a $t$-design 
if and only if 
\[
\sum_{u\in C,\wt(u)=i}\widetilde{f}(u)=0,
\]
for all $f\in \Harm_k$, 
$1 \leq  k \leq  t$.
\end{lem}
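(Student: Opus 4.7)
The plan is to reduce the statement to Delsarte's classical harmonic characterization of combinatorial $t$-designs (\cite[Theorem 7]{Delsarte}): a collection $\mathcal{B}$ of $k$-subsets of $\Omega$ is a $t$-design if and only if $\sum_{B\in\mathcal{B}}\widetilde{f}(B)=0$ for every $f\in\Harm_k$ with $1\le k\le t$. Since $\widetilde{f}(u)$ depends on $u$ only through $\supp(u)$, the identity the lemma equates with the $t$-design property should reduce to the identity in Delsarte's theorem once I count the fibres of the map $u\mapsto\supp(u)$ from $C_i$ to $\mathcal{B}(C_i)$.

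Accordingly, the key step is to establish that for $d\le i\le w_0$, any two codewords $x,y\in C_i$ with $\supp(x)=\supp(y)$ must be scalar multiples of one another, so that the fibre above each block has exactly $q-1$ elements. Granting this, and using that $\widetilde{f}$ is constant on fibres, I would obtain
\[
\sum_{u\in C,\,\wt(u)=i}\widetilde{f}(u)=(q-1)\sum_{B\in\mathcal{B}(C_i)}\widetilde{f}(B),
\]
and both directions of the lemma follow at once by applying Delsarte's theorem to the right-hand sum.

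To prove the scalar-multiple claim, suppose $x,y\in C_i$ share support $S$ but $x\notin\FF_q^{*}y$. Then for every $\lambda\in\FF_q^{*}$ the codeword $x-\lambda y$ is nonzero with support contained in $S$, while for each $j\in S$ there is exactly one value $\lambda=x_j y_j^{-1}\in\FF_q^{*}$ making $(x-\lambda y)_j=0$. Summing this coordinatewise count over $j\in S$ yields
\[
\sum_{\lambda\in\FF_q^{*}}\bigl(i-\wt(x-\lambda y)\bigr)=i,
\]
so by pigeonhole some $\lambda$ gives a nonzero codeword of weight at most $i-\lceil i/(q-1)\rceil = i-\lfloor(i+q-2)/(q-1)\rfloor$. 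Because this bound is non-decreasing in $i$, the hypothesis on $w_0$ combined with $i\le w_0$ forces it to be strictly less than $d$, contradicting the minimum distance of $C$. When $q=2$ there is nothing to prove, which matches the convention $w_0=n$.

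The only genuinely substantive step is this scalar-multiple/pigeonhole argument together with the arithmetic identity $\lfloor(w_0+q-2)/(q-1)\rfloor=\lceil w_0/(q-1)\rceil$; once it is in place, the lemma is a purely formal consequence of Delsarte's harmonic characterization.
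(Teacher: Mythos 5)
Your proof is correct. Note that the paper itself contains no proof of this lemma --- it is quoted without proof from \cite[Theorem 7]{Delsarte} and \cite[Lemma 2.5]{Tanabe} --- and your argument is essentially the standard one behind those citations: the pigeonhole count $\sum_{\lambda\in\FF_q^{*}}\bigl(i-\wt(x-\lambda y)\bigr)=i$, the identity $\lceil i/(q-1)\rceil=\lfloor(i+q-2)/(q-1)\rfloor$, and the monotonicity of $w\mapsto w-\lfloor(w+q-2)/(q-1)\rfloor$ together show that the defining property of $w_0$ forces codewords of weight $i\le w_0$ with equal supports to be proportional (trivially so when $q=2$, matching the convention $w_0=n$), whence each block of $\mathcal{B}(C_i)$ has fibre of constant size $q-1$ under $u\mapsto\supp(u)$ and Delsarte's harmonic characterization of $t$-designs transfers verbatim, the nonzero factor $q-1$ preserving both directions of the equivalence.
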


In \cite{Bachoc}, the harmonic weight enumerator associated to a linear code $C$ was defined as follows: 
\begin{Def}[{\cite[Definition 2.1]{Bachoc}},{\cite[Definition 4.1]{Bachoc2}}]
Let $C$ be a linear code of length $n$
and let $f\in\Harm_{k}$. 
The harmonic weight enumerator associated to $C$ and $f$ is
\[
W_{C,f}(x,y)=\sum_{c\in C}\widetilde{f}(c)x^{n-\wt(c)}y^{\wt(c)}.
\]
\end{Def}

Then 
the submodules of harmonic weight enumerators
are described as follows: 
\begin{Thm}[{\cite[Lemma 3.1]{Bachoc}},{\cite[Lemma 6.1 and 6.2]{Bachoc2}}]\label{thm:invariant}
Let $C$ be a linear code of length $n$,
and let $f \in \Harm_{k}$.
\begin{enumerate}
\item [{\rm (1)}]
Suppose $C$ is a binary doubly even self-dual code.
Then
\begin{align*}
W_{C,f}(x,y)&\in
\begin{cases}
(xy)^k \CC[P_8,P_{24}]&\text{ if }k \equiv 0\pmod{4},\\
(xy)^k P_{12}\CC[P_8,P_{24}]&\text{ if }k \equiv 2\pmod{4},\\
(xy)^k P_{18}\CC[P_8,P_{24}]&\text{ if }k \equiv 3\pmod{4},\\
(xy)^k P_{30}\CC[P_8,P_{24}]&\text{ if }k \equiv 1\pmod{4},
\end{cases}
\intertext{where}
P_{8}&=x^8+14x^4y^4+y^8, \\
P_{12}&=x^2y^2(x^4-y^4)^2, \\
P_{18}&=xy(x^8-y^8)(x^8-34x^4y^4+y^8), \\
P_{24}&=P_{12}^2,\\
P_{30}&=P_{12}P_{18}.
\end{align*}

\item [{\rm (2)}]
Suppose $C$ is a ternary self-dual code.
Then
\begin{align*}
W_{C,f}&\in 
\begin{cases}
xy p_{14}\CC[g_4,g_{12}],&\text{ if }k=1,\\
(xy)^2 p_{4}\CC[g_4,g_{12}],&\text{ if }k=2,\\
\end{cases}
\intertext{where}
p_{4}&=y(x^3-y^3), \\
p_{14}&=y^2(x^3-y^3)^2(x^6-20x^3y^3-8y^6), \\
g_{4}&=x^4+8xy^3, \\
g_{12}&=y^3(x^3-y^3)^3.
\end{align*}

\item [{\rm (3)}]
Suppose $C$ is a quaternary self-dual code.
Then
\begin{align*}
W_{C,f}&\in 
\begin{cases}
xy q_{6}\CC[h_2,h_{6}],&\text{ if }k=1,\\
(xy)^2 \CC[h_2,h_{6}],&\text{ if }k=2,\\
\end{cases}
\intertext{where}
h_{2}&=x^2+3y^2, \\
h_{6}&=y^2(x^2-y^2)^2, \\
q_{6}&=y(x^2-y^2)(x^3-9xy^2).
\end{align*}

\end{enumerate}
\end{Thm}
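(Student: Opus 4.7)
The plan is to combine a harmonic MacWilliams identity with classical invariant theory, following Bachoc's strategy.

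First, for a self-dual code $C$ over $\FF_q$ and $f\in \Harm_k$, I would establish a harmonic MacWilliams-type identity relating $W_{C,f}$ to its image under the appropriate MacWilliams matrix: $\sigma=\tfrac{1}{\sqrt{2}}\left(\begin{smallmatrix}1&1\\1&-1\end{smallmatrix}\right)$ for $q=2$, and the analogous matrices (specialized to Hamming weight) for $q=3,4$. Up to a standard normalization, the identity asserts that $W_{C,f}(x,y)/(xy)^k$ is a relative invariant of $\sigma$ with a character depending on $k$. The proof is a character-sum / Poisson-summation computation extended to incorporate the harmonic factor $\widetilde f$; the key algebraic input is that harmonic functions behave as eigenvectors of a discrete Laplacian, which produces the $k$-twist in the transformation law.

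Second, I would encode the divisibility condition satisfied by $C$ as a second transformation: substituting $y\mapsto \zeta y$ for $\zeta$ a primitive $4$th, $3$rd, or $2$nd root of unity (in the doubly even, ternary, and quaternary cases respectively) leaves $W_{C,f}$ invariant, since every $\wt(c)$ lies in the corresponding subgroup of $\ZZ$. Together with $\sigma$, this generates a finite complex reflection group $G_q$, and $W_{C,f}/(xy)^k$ lies in a single character-isotypic component $\CC[x,y]^{\chi_k}$ under $G_q$. Third, I would identify the isotypic components explicitly. Using Molien's formula
\[
\sum_{d\ge 0}\dim\bigl(\CC[x,y]_d^{\chi}\bigr)T^d=\frac{1}{|G_q|}\sum_{g\in G_q}\frac{\overline{\chi(g)}}{\det(1-gT)},
\]
I would compute the Hilbert series of each relevant character-isotypic component and match it with the claimed free-module structure over $\CC[P_8,P_{24}]$ (respectively $\CC[g_4,g_{12}]$ or $\CC[h_2,h_6]$). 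A direct substitution check confirms that the polynomials $P_8,P_{12},P_{18},P_{24},P_{30}$ and their ternary/quaternary analogues $p_4,p_{14},q_6$ are relative invariants with the predicted characters; combined with the Hilbert-series calculation, this forces the stated module decomposition.

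Finally, for the $(xy)^k$ factor: divisibility by $y^k$ is immediate, since any codeword $c$ with $\wt(c)<k$ has $\widetilde f(c)=0$ as an empty sum. Divisibility by $x^k$ follows from the harmonic identity $\sum_{z\supset S,\,|z|=k}f(z)=0$ for every $S$ with $|S|<k$ (a direct consequence of $\gamma f=0$, obtained by summing the relation over all $(k-1)$-subsets $y$ with $S\subset y$), which via inclusion-exclusion gives $\widetilde f(u)=0$ whenever $|\Omega\setminus u|<k$. The main technical obstacle is the Molien bookkeeping: one must determine, for each residue class of $k$ modulo the order of the relevant central character of $G_q$, which isotypic component houses $W_{C,f}/(xy)^k$ and which explicit generators realize the module structure. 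All remaining ingredients are routine extensions of the classical Gleason--Pierce framework.
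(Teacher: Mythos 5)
The paper does not actually prove this statement: it is quoted with a citation to Bachoc's papers (\cite[Lemma 3.1]{Bachoc}, \cite[Lemmas 6.1, 6.2]{Bachoc2}), so there is no in-paper proof to compare against. Your proposal is a correct reconstruction of Bachoc's original argument --- the harmonic MacWilliams identity exhibiting $W_{C,f}/(xy)^k$ as a relative invariant of the Gleason-type group generated by $\sigma$ and the root-of-unity substitution, followed by a Molien-series identification of the one-dimensional isotypic components and a direct check that $P_8,P_{12},P_{18},P_{24},P_{30}$ (resp.\ $p_4,p_{14},g_4,g_{12}$, $h_2,h_6,q_6$) realize them --- so it matches the cited proof in both substance and structure.
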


\begin{rem}
In {\cite[Lemma 3.1]{Bachoc}} and {\cite[Lemma 6.1 and 6.2]{Bachoc2}},
explicit sets of generators of the submodules for general $k$ were given.
We omit listing them here, since we do not need them.
\end{rem}
\subsection{Lattices and spherical $t$-designs}
A lattice in $\R^{n}$ is a subgroup $L \subset \R^{n}$ 
with the property that there exists a basis 
$\{e_{1}, \ldots, e_{n}\}$ of $\R^{n}$ 
such that $L =\Z e_{1}\oplus \cdots \oplus\Z e_{n}$.
The dual lattice of $L$ is the lattice
\[
L^{\sharp}:=\{y\in \R^{n}\mid (y,x) \in\Z , \ \forall x\in L\}, 
\]
where $(x,y)$ is the standard inner product. 
In this paper, we assume that the lattice $L $ is integral, 
that is, $(x,y) \in\Z$ for all $x$, $y\in L$. 
An integral lattice $L$ is called even if $(x,x) \in 2\Z$ for all $x\in L$. 
An integral lattice $L$ is called unimodular if $L^{\sharp}=L$. 

The concept of a spherical $t$-design is due to Delsarte--Goethals--Seidel 
\cite{DGS}. For a positive integer $t$, a finite nonempty set $X$
in the unit sphere
\[
S^{n-1} = \{x = (x_1, \ldots , x_n) \in \R ^{n}\ |\ x_1^{2}+ \cdots + x_n^{2} = 1\}
\]
is called a spherical $t$-design in $S^{n-1}$ if the following condition is satisfied:
\[
\frac{1}{|X|}\sum_{x\in X}f(x)=\frac{1}{|S^{n-1}|}\int_{S^{n-1}}f(x)d\sigma (x), 
\]
for all polynomials $f(x) = f(x_1, \ldots ,x_n)$ of degree not exceeding $t$. 
A finite subset $X$ in $S^{n-1}(r)$, the sphere of radius $r$ 
centered at the origin, 
is also called a spherical $t$-design if $(1/r)X$ is 
a spherical $t$-design in the unit sphere $S^{n-1}$.
Then we say that $L_\ell$ is a spherical $t$-design 
if $(1/\sqrt{\ell})L_\ell$ is a spherical $t$-design. 

Let 
$\Harm_{j}(\R^{n})$ denote
the set of homogeneous 
harmonic polynomials of degree $j$:
\[
\Harm_{j}(\R^{n})=
\{f\in \CC[x_1, \ldots,x_n] \mid \deg(f) = j \text{ and } 
\sum_{i=1}^n\frac{\partial^2}{\partial^2x_i}
f = 0\}. 
\]
It is well known that $X$ is a spherical $t$-design 
if and only if the condition 
\[
\sum_{x\in X}P(x)=0 
\]
holds for all $P \in\Harm_j(\R^n)$ with $1 \leq  j \leq  t$. 
If the set $X$ is antipodal, that is $-X=X$, and $j$ is odd, then the above condition is fulfilled automatically. So we reformulate the condition of spherical $t$-design 
for antipodal sets as follows:

\begin{prop}\label{thm:design-lattice}
A nonempty finite antipodal subset $X\subset S^{n-1}_{m}$ is a spherical 
$(2s+1)$-design if and only if the condition 
\[
\sum_{x\in X}P(x)=0 
\]
holds for all $P \in\Harm_{2j}(\R^n)$ with 
$1\leq j\leq s$.
\end{prop}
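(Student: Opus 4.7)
The plan is to derive Proposition \ref{thm:design-lattice} directly from the harmonic-polynomial criterion for spherical designs recalled just above it: namely, $X \subset S^{n-1}_m$ is a spherical $t$-design if and only if $\sum_{x\in X} P(x) = 0$ for every $P \in \Harm_j(\R^n)$ with $1 \leq j \leq t$. With $t = 2s+1$, the task is to show that, under the antipodal hypothesis, the conditions for odd degrees $j$ are automatic, so that only the even-degree conditions $j = 2, 4, \ldots, 2s$ remain.

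First, I would record that every $P \in \Harm_j(\R^n)$ is homogeneous of degree $j$, so $P(-x) = (-1)^j P(x)$. In particular, for odd $j$, $P$ is an odd function. Next, since $X \subset S^{n-1}_m$ consists of nonzero vectors and satisfies $-X = X$, the map $x \mapsto -x$ is a fixed-point-free involution on $X$, so $X$ is a disjoint union of antipodal pairs $\{x, -x\}$. Summing $P$ over such a pair gives $P(x) + P(-x) = P(x) + (-1)^j P(x)$, which equals $0$ when $j$ is odd. Hence
\[
\sum_{x \in X} P(x) = 0 \quad\text{for all } P \in \Harm_j(\R^n) \text{ with } j \text{ odd},
\]
regardless of any design hypothesis.

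Combining these observations, the $t = 2s+1$ conditions coming from degrees $1, 3, 5, \ldots, 2s+1$ hold automatically from antipodality, while the remaining conditions come from degrees $2, 4, \ldots, 2s$, which are exactly $2j$ for $1 \leq j \leq s$. This establishes both directions of the equivalence: the ``only if'' direction is immediate from the criterion restricted to even degrees, and the ``if'' direction follows by combining the stated even-degree vanishing with the automatic odd-degree vanishing just proved. There is no real obstacle here; the only point requiring a moment's care is noting that $0 \notin X$ (which is ensured by $X \subset S^{n-1}_m$) so that the involution $x \mapsto -x$ really is fixed-point-free and the pairing argument is valid.
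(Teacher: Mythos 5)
Your argument is correct and is essentially the paper's own: the paper derives this proposition from the same harmonic-polynomial criterion, noting that antipodality makes the odd-degree conditions automatic, which is exactly the pairing argument you spell out. No discrepancies to report.
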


\subsection{Spherical theta series}

Let $\mathbb{H} :=\{z\in\CC\mid {\rm Im}(z) >0\}$ be the upper half-plane. 
\begin{df}
Let $L$ be an integral lattice in $\R^{n}$. 
For a polynomial $P$, the function 
on $\mathbb{H}$ defined by
\[
\vartheta _{L, P} (z):=\sum_{x\in L}P(x)
e^{\pi iz(x,x)}
\]
is called the theta series of $L $ weighted by $P$. 
\end{df}

\begin{rem}
$\\ $
\vspace{-10pt}
\begin{itemize}
\item 
[\rm (i)]
When $P=1$, we get the classical theta series 
\[
\vartheta _{L} (z)=\vartheta _{L, 1} (z)=\sum_{m\ge 0}|L_{m}|q^{m},
\quad\text{where }q=e^{\pi i z}. 
\]
\item 
[\rm (ii)] 
The weighted theta series can be written as 
\begin{equation}\label{weighted_th}
\vartheta _{L, P} (z)
=\sum_{m\geq 0}a^{(P)}_{m}q^{m}, 
\text{ where } a^{(P)}_{m}:=\sum_{x\in L_{m}}P(x). 
\end{equation}
\end{itemize}
\end{rem}

\begin{lem}[{\cite[Lemma 5]{Pache}},{\cite{Venkov1},\cite{Venkov2}}]\label{lem:lempache}
Let $L$ be an integral lattice in $\R^{n}$. Then, for $m>0$, 
the non-empty shell $L_{m}$ is a spherical 
$(2s+1)$-design
if and only if 
\[
a^{(P)}_{m}=0\quad\text{for every }P\in\Harm_{2j}(\R^{n}) 
\text{ and $1\leq j\leq s$,}
\]
where $a^{(P)}_{m}$ is the Fourier coefficient of the weighted theta series 
\eqref{weighted_th}.
\end{lem}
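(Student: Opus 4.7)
The plan is to deduce the lemma directly from Proposition~\ref{thm:design-lattice} by combining the antipodal symmetry of lattice shells with the homogeneity of harmonic polynomials; after that the statement reduces to unpacking the definition of the weighted theta series in \eqref{weighted_th}.

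First I would observe that for any lattice $L$ we have $-L = L$, so each shell $L_m$ is antipodal. Assuming $L_m \neq \emptyset$ and $m > 0$, the rescaled set $X := (1/\sqrt{m})L_m$ is a nonempty antipodal subset of the unit sphere $S^{n-1}$. By the convention introduced just before Proposition~\ref{thm:design-lattice}, saying that $L_m$ is a spherical $(2s+1)$-design is by definition saying that $X$ is one. Proposition~\ref{thm:design-lattice} then tells us that this is equivalent to
\[
\sum_{y \in X} P(y) = 0 \quad \text{for every } P \in \Harm_{2j}(\R^n),\ 1 \le j \le s.
\]

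Next I would convert the sum over $X$ into a sum over $L_m$ using homogeneity. If $P \in \Harm_{2j}(\R^n)$, then $P$ is homogeneous of degree $2j$, whence
\[
\sum_{y \in X} P(y) \;=\; \sum_{x \in L_m} P(x/\sqrt{m}) \;=\; m^{-j} \sum_{x \in L_m} P(x) \;=\; m^{-j} a_m^{(P)},
\]
where the last equality is just the definition of $a_m^{(P)}$ in \eqref{weighted_th}. Since $m > 0$, this expression vanishes if and only if $a_m^{(P)} = 0$, which gives the desired equivalence.

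There is no substantial obstacle here: the lemma is essentially a restatement of Proposition~\ref{thm:design-lattice} once one identifies the sum defining a harmonic spherical design condition with the Fourier coefficient of the weighted theta series. The only point worth stating carefully is the normalization convention, since the Delsarte--Goethals--Seidel definition is given on the unit sphere while $L_m$ lives on the sphere of radius $\sqrt{m}$; homogeneity of $P$ reconciles these by the harmless scalar $m^{-j}$, which does not affect vanishing.
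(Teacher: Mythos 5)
Your argument is correct and complete: the paper itself gives no proof of this lemma (it is quoted from Pache and Venkov), but your derivation from Proposition~\ref{thm:design-lattice} together with the antipodality of $L_m$ and the homogeneity of $P\in\Harm_{2j}(\R^n)$ is exactly the standard route, and the scalar $m^{-j}$ is indeed harmless for the vanishing condition since $m>0$. Nothing is missing.
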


For example, we consider an even unimodular lattice $L$. 
Then the weighted theta series $\vartheta_{L, P}(z)$ of $L $ weighted by 
a harmonic polynomial $P$, is a modular form with respect to $SL_{2}(\Z)$. 
In general, we have the following: 
\begin{lem}[{\cite[Theorem 12, Proposition 16]{Pache}}]\label{rem:pache_2.1}
Let $L\subset \RR^n$ be an even unimodular lattice of of 
rank $n = 8N$ and
of minimum norm $2M$.
Let 
\begin{align*}
E_4(z)&:=1+240\sum_{n=1}^{\infty}\sigma_{3}(n)q^{2n}, \\
E_6(z)&:=1-504\sum_{n=1}^{\infty}\sigma_{5}(n)q^{2n}, \\
\Delta(z)&:=\frac{E_4(z)^3-E_6(z)^2}{1728}\\
&=q^2-24q^4+\cdots, 
\end{align*}
where $\sigma_{k-1}(n):=\sum_{d\mid n}d^{k-1}$, and $q=e^{\pi iz}$. 
Then we have 
for $P\in\Harm_{2j} (\RR_n)$, 
\[
\vartheta_{L,P}\in
\begin{cases}
\CC[E_4,\Delta]& \text{if $j$ is even},\\
E_6\CC[E_4,\Delta]& \text{if $j$ is odd}. 
\end{cases}
\]
More precisely, 
there exist $c_i\in \CC$ such that 
\[
\vartheta_{L,P}=
\begin{cases}
\displaystyle\sum_{i=M}^{[(N+j/2)/3]}
c_i\Delta^i
E_4^{N+j/2-3i}& \text{if $j$ is even},\\
\displaystyle
\sum_{i=M}^{[(N+j/2)/3]}
c_iE_6\Delta^i
E_4^{N+(j-3)/2-3i}&\text{if $j$ is odd}. 
\end{cases}
\]
In particular, 
$\vartheta_{L,P} = 0$
if $j$ is even and $3M > N + j/2$, 
or $j$ is odd and $3M > N + (j-3)/2$. 
\end{lem}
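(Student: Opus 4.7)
The plan is to combine three ingredients: the modular transformation law of harmonic theta series, the structure of the ring of modular forms for $SL_2(\Z)$, and a vanishing-order argument at the cusp driven by the minimum-norm hypothesis.

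First, by Hecke's classical theorem on theta series with spherical coefficients, $\vartheta_{L,P}(z)$ is a holomorphic modular form on $SL_2(\Z)$ of weight $n/2 + 2j = 4N + 2j$ whenever $L$ is an even unimodular lattice of rank $n = 8N$ and $P \in \Harm_{2j}(\RR^n)$. Modularity under $z \mapsto -1/z$ follows from Poisson summation applied to the Schwartz function $P(x) e^{-\pi(x,x)}$, which (by harmonicity of $P$) is an eigenfunction of the Fourier transform up to sign; invariance under $z \mapsto z+2$ is immediate from the $q$-expansion and the fact that $(x,x) \in 2\Z$, and together these generate the full modular group since $L$ is unimodular.

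Second, I would describe modular forms of weight $4N + 2j$ in terms of $E_4$ and $\Delta$. Since $M_*(SL_2(\Z)) = \CC[E_4, E_6]$ and a monomial $E_4^a E_6^b$ has weight $4a + 6b$, the equation $4a + 6b = 4N + 2j$ forces $b \equiv j \pmod{2}$. When $j$ is even, every such $b$ is even, and the relation $E_6^2 = E_4^3 - 1728\Delta$ rewrites the monomial as a polynomial in $E_4$ and $\Delta$. When $j$ is odd, every monomial carries an odd power of $E_6$, so factoring out a single $E_6$ reduces to the even case. Dimension counting then gives the explicit bases $\{\Delta^i E_4^{N+j/2-3i}\}$ and $\{E_6\Delta^i E_4^{N+(j-3)/2-3i}\}$, where the range of $i$ is constrained from above by the non-negativity of the $E_4$-exponent.

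Third, I would exploit the minimum-norm hypothesis to pin down the lower range of $i$. For $j \geq 1$, the polynomial $P$ is homogeneous of positive degree, so $P(0) = 0$; combined with $L_m = \emptyset$ for $0 < m < 2M$, the Fourier coefficients $a^{(P)}_m$ in \eqref{weighted_th} vanish for all $m < 2M$. Since $E_4 = 1 + O(q^2)$ and $\Delta = q^2 + O(q^4)$, each basis element $\Delta^i E_4^{\cdots}$ (resp.\ $E_6 \Delta^i E_4^{\cdots}$) has $q$-expansion starting at $q^{2i}$. Matching the order of vanishing at the cusp therefore forces $c_i = 0$ for $i < M$, so the sum begins at $i = M$ as claimed. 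The final assertion is then automatic: when $3M$ exceeds the upper bound on $i$, the index set is empty and $\vartheta_{L,P} = 0$.

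The main obstacle is invoking Hecke's modularity theorem in the precise form needed; once that is in place, everything else is routine bookkeeping with weights and $q$-orders in the graded ring $\CC[E_4, \Delta]$.
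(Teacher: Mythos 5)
The paper does not actually prove this lemma: it is quoted, with attribution, from Pache [Theorem 12, Proposition 16], so there is no internal proof to compare against. Your argument is the standard one and is essentially sound. Hecke's theorem gives that $\vartheta_{L,P}$ is a modular form of weight $n/2+2j=4N+2j$ for $SL_2(\Z)$; in $\CC[E_4,E_6]$ the congruence $4a+6b=4N+2j$ forces $b\equiv j\pmod 2$, so eliminating $E_6^2=E_4^3-1728\Delta$ yields the two cases; and since $P(0)=0$ for $j\geq 1$ and $L_m=\emptyset$ for $0<m<2M$, the expansion of $\vartheta_{L,P}$ begins at $q^{2M}$, which by the triangularity of the expansions $\Delta^iE_4^{\cdots}=q^{2i}(1+O(q^2))$ forces $c_i=0$ for $i<M$. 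The final vanishing statement is then the emptiness of the index range.

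Two small points to repair. First, $S$ and $T^2$ generate the theta group $\Gamma_\theta$, of index $3$ in $PSL_2(\Z)$, not the full modular group, so your stated justification for full modularity fails as written; fortunately you do not need it, because $(x,x)\in 2\Z$ for all $x\in L$ already gives invariance under $z\mapsto z+1$, and $S$ together with $T$ does generate $SL_2(\Z)$. Second, your lower-bound argument explicitly uses $j\geq 1$ (so that $P(0)=0$); the lemma as stated formally admits $j=0$, where the sum cannot start at $i=M$ unless $M=0$, but $j\geq 1$ is the only case used in the paper, so this is a caveat about the statement rather than a defect of your proof.
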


\section{Proof of Theorem \ref{thm:AM} and
Corollary \ref{cor:res}}\label{sec:AM}

In this section, 
we give proofs of Theorem \ref{thm:AM} and Corollary~\ref{cor:res}.

\subsection{Proof of Theorem \ref{thm:AM} (1)}

Let $C$ be a binary doubly even self-dual code of 
length $n=24m$, 
and let $f\in\Harm_{1}$.
It is enough to show that $W_{C,f}(x,y)=0$ by Theorem~\ref{thm:design}. 
By Theorem~\ref{thm:invariant} (1)
we have 
\[
W_{C,f}(x,y) =
xy 
 P_{30}Q, 
\]
where $Q\in \CC[P_8,P_{24}]$.
Because of $\min(C)=4m$, 
$W_{C,f}(x,y)$ is divisible by $y^{4m}$. This implies that
$Q$ is divisible by $y^{4m-4}$, and hence
$Q = P_{24}^{m-1}Q'$ for some $Q'\in \CC[P_8,P_{24}]$. 
If this polynomial is nonzero, then 
\[
24m=\deg W_{C,f}(x,y)=32+\deg Q\geq32+24(m-1).
\]
This contradiction proves $Q=0$, and hence
$W_{C,f} (x, y) = 0$.

\subsection{Proof of Theorem \ref{thm:AM} (2)}

Let 
$C$ be a ternary self-dual code of 
length $n=12m$, 
and let $f\in\Harm_{1}$.
It is enough to show that $W_{C,f}(x,y)=0$ by Lemma~\ref{thm:design}. 
By Theorem~\ref{thm:invariant} (2) 
we have 
\[
W_{C,f}(x,y) =xy p_{14}Q,
\]
where $Q\in \CC[g_{4},g_{12}]$.
Because of $\min(C)=3m$, $W_{C,f}(x,y)$ is divisible by $y^{3m}$.
This implies that $Q$ is divisible by $y^{3m-3}$, and hence
$Q = g_{12}^{m-1}Q'$ for some $Q'\in \CC[g_4,g_{12}]$. 
If this polynomial is nonzero, then 
\[
12m=\deg W_{C,f}(x,y)=16+\deg Q\geq 16+12(m-1).
\]
This contradiction proves $Q=0$, and hence
$W_{C,f} (x, y) = 0$. 

\subsection{Proof of Theorem \ref{thm:AM} (3)}

Let 
$C$ be a quaternary self-dual code of 
length $n=6m$, 
and let $f\in\Harm_{1}$.
It is enough to show that $W_{C,f}(x,y)=0$ by Lemma~\ref{thm:design}. 
By Theorem~\ref{thm:invariant} (3) 
we have 
\[
W_{C,f}(x,y) =xy 
q_6Q, 
\]
where $Q\in \CC[h_2,h_{6}]$.
Because of $\min(C)=2m$, 
$W_{C,f}(x,y)$ is divisible by $y^{2m}$.
This implies that $Q$ is divisible by $y^{2m-1}$, and hence
$Q = h_{6}^{m-1}Q'$ for some $Q'\in \CC[h_2,h_{6}]$. 
If this polynomial is nonzero, then 
\[
6m=\deg W_{C,f}(x,y)=8+\deg Q\geq 8+6(m-1).
\]
This contradiction proves $Q=0$, and hence
$W_{C,f} (x, y) = 0$. 

\subsection{Proof of Theorem \ref{thm:AM} (4)}
Let $L$ be an even unimodular lattice of rank $24m$ with minimum norm
$2m$.
Let us assume that 
$P\in\Harm_2(\RR^{24m})$. 
Then by Lemma~\ref{rem:pache_2.1} we have 
$\vartheta_{L,P}(z) =0$. Thus, the result follows from 
Lemma~\ref{lem:lempache}.

\subsection{Proof of Corollary \ref{cor:res}}

The following lemma is easily seen. 

\begin{Lem}[{{\cite[Page 3, Proposition 1.4]{CL}}}]\label{lem: divisible}
Let $\lambda(S)$ be the number of blocks containing a given set $S$
of $s$ points in a combinatorial $t$-$(v,k,\lambda)$ design, where $0\leq s\leq t$. Then
\[
\lambda(S)\binom{k-s}{t-s}
=
\lambda\binom{v-s}{t-s}. 
\]
In particular, the number of blocks is
\[\frac{v(v-1)\cdots(v-t+1)}{k(k-1)\cdots(k-t+1)}\lambda.\]
\end{Lem}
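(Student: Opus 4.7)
The plan is to establish the identity by a standard double count of the incidence set
\[
\mathcal{P} = \{(T, B) \mid S \subseteq T \subseteq B,\ |T| = t,\ B \in \mathcal{B}\},
\]
where $\mathcal{B}$ is the block collection of the design and $S$ is the fixed $s$-subset of points.

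First, I would count $|\mathcal{P}|$ by choosing the $t$-set $T$ first. The number of $t$-subsets $T$ containing $S$ equals $\binom{v-s}{t-s}$, since $T \setminus S$ is an arbitrary $(t-s)$-subset of the remaining $v-s$ points; for each such $T$, the defining property of a $t$-$(v,k,\lambda)$ design provides exactly $\lambda$ blocks through $T$. This yields $|\mathcal{P}| = \lambda \binom{v-s}{t-s}$. Second, I would count by choosing the block $B$ first. Only blocks containing $S$ can contribute, and there are $\lambda(S)$ of them by definition; for each such $B$, the admissible $T$'s are in bijection with the $(t-s)$-subsets of the $(k-s)$-set $B \setminus S$, giving $\binom{k-s}{t-s}$ choices. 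Hence $|\mathcal{P}| = \lambda(S)\binom{k-s}{t-s}$.

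Equating the two counts yields the displayed identity. The ``in particular'' clause then follows by specializing to $s = 0$: there $\lambda(\emptyset)$ equals the total number of blocks (every block contains the empty set), so the identity reduces to $|\mathcal{B}|\binom{k}{t} = \lambda\binom{v}{t}$, which rearranges into the stated product formula. There is no genuine obstacle here — the argument is entirely elementary double counting — the only thing worth checking is that the boundary cases $s=0$ and $s=t$ behave correctly, which they do since the binomial coefficients simply trivialize.
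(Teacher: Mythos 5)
Your double count of the incidence set $\{(T,B)\mid S\subseteq T\subseteq B,\ |T|=t,\ B\in\mathcal{B}\}$ is correct, and the specialization to $s=0$ correctly yields the block-count formula. The paper itself offers no proof --- it states the lemma is ``easily seen'' and cites Cameron--van Lint --- and your argument is precisely the standard one that reference (and the authors) have in mind, so there is nothing further to reconcile.
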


We give the proof of Corollary \ref{cor:res} (1). 
The other cases can be proved similarly. 

Let $C$ be a {binary} doubly even self-dual code of length $24m$ 
with minimum weight $4m$. 
By Theorem~\ref{thm:AM} (1), 
$C_{4m}$ is a combinatorial $1$-design. 
Then by Lemma~\ref{lem: divisible}, 
$|C_{4m}|$ is divisible by $6$. 
This means that the coefficient of $x^{24m-4m}y^{4m}$
in the weight enumerator of $C$ is divisible by $6$. 
This completes the proof of Corollary \ref{cor:res} (1).

\section{Proof of Theorem \ref{thm:rising}}\label{sec:rising}
In this section, we give a proof of Theorem~\ref{thm:rising}. 

\subsection{Proof of Theorem \ref{thm:rising} (1)}

Let $C$ be a binary doubly even self-dual code of 
length $n=24m$,
and let $f\in\Harm_{2}$.
Then by Theorem~\ref{thm:invariant} (1) 
we have 
$W_{C,f}(x,y) =(xy)^{2} P_{12}Q$
for some $Q\in\CC[P_8,P_{24}]$. 
Because of $\min(C)=4m$, 
$W_{C,f}$ is divisible by $y^{4m}$. This implies that 
$Q$ is divisible by $y^{4m-4}$, and hence
$Q$ is divisible by $P_{24}^{m-1}$. Since
$Q$ has degree $24m-16$, this forces $W_{C,f}$ to be a
constant multiple of $(xy)^2 P_8 P_{12}^{2m-1}$.
The coefficient of $y^{4m+4}$ in 
$(xy)^2 P_8 P_{12}^{2m-1}$ is equal to the coefficient
of $y^4$ in
\begin{equation}\label{P8P12}
(x^8+14x^4y^4+y^8)(x^4-y^4)^{4m-2}
\end{equation}
which is $16-4m$. This vanishes
when $m=4$. Therefore, $C_{4m+4}=C_{20}$ is a $2$-design
by Lemma~\ref{thm:design}.


\subsection{Proof of Theorem \ref{thm:rising} (2)}
Let $L$ be an even unimodular lattice of rank $24m$ with minimum norm
$2m$.
Let us assume that 
$P\in\Harm_4(\RR^{24m})$. 
Then by the Lemma~\ref{rem:pache_2.1} 
we have $\vartheta_{L,P}(z)\in\CC[E_4,\Delta]$.
Since $L$ has minimum norm $2m$, $\vartheta_{L,P}(z)$
is a constant multiple of $\Delta^m E_4$. The coefficient
of $q^{2m+2}$ in $\Delta^m E_4$ is equal to the
coefficient of $q^2$ in 
\[(1-24q^2)^m(1+240q^2),\]
which is $240-24m$. This vanishes when $m=10$. Therefore,
$L_{22}$ supports a spherical $5$-design by 
Lemma~\ref{lem:lempache}.

\section{Binary doubly even self-dual codes of length 96}\label{sec:96}

In order to prove Theorem~\ref{thm:ub},
let us introduce the
fundamental equation for combinatorial designs in \cite{Koch}. 
Let $\Omega=\{1,\dots,n\}$, and suppose that $(\Omega,\mathcal{B})$
is a combinatorial $t$-$(n,k,\lambda)$ design.
Let $c' \in \FF_2^n$ and
\[u_j (c') := |\{x \in 
\mathcal{B}\mid |\supp(c') \cap x| = j\}|. \]
Then the following holds:
\begin{align}\label{eqn:fund}
\sum_{j=\mu}^{\wt(c')}\binom{j}{\mu}u_j(c')
=\lambda_{\mu}
\binom{\wt(c')}{\mu}
,\quad \mu = 1,\ldots, t, 
\end{align}
where we denote by $\lambda_\mu$ the
number of blocks which contain a given set of $\mu$ points. 
Here we note that
\[
\lambda_\mu =
\frac{k(k - 1)\cdots(k - \mu + 1)}
{n(n - 1)\cdots(n - \mu + 1)}
|\mathcal{B}|
\]
by Lemma~\ref{lem: divisible}. Another consequence of
Lemma~\ref{lem: divisible} is the following.

\begin{lem}\label{lem:divCl}
If $(\Omega,\mathcal{B})$
is a combinatorial $t$-$(n,k,\lambda)$ design, then 
$|\mathcal{B}|$ is divisible by the numerator of
\[\frac{n(n-1)\cdots(n-s+1)}{k(k-1)\cdots(k-s+1)}
\quad(s=1,\dots,t)\]
as an irreducible fraction.
\end{lem}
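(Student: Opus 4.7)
The plan is to exhibit, for each $s\in\{1,\dots,t\}$, an integer identity of the form
\[
|\mathcal{B}| = \lambda_s \cdot \frac{n(n-1)\cdots(n-s+1)}{k(k-1)\cdots(k-s+1)},
\]
where $\lambda_s$ denotes the number of blocks containing a fixed $s$-subset of $\Omega$. Once this is in place, the divisibility claim is a purely arithmetic consequence.

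To get the identity, I would apply Lemma~\ref{lem: divisible} twice: once with the $s$ appearing in the current statement to obtain
\[
\lambda_s = \lambda\cdot\frac{(n-s)(n-s-1)\cdots(n-t+1)}{(k-s)(k-s-1)\cdots(k-t+1)},
\]
and once with $s=0$ to obtain the stated expression for $|\mathcal{B}|$. Dividing the second by the first makes the common factors indexed by $j=s,s+1,\dots,t-1$ cancel, leaving the desired quotient. (An equivalent and perhaps cleaner derivation is the direct double count of incidences $(S,B)$ with $|S|=s$, $B\in\mathcal{B}$, $S\subset B$, which yields $|\mathcal{B}|\binom{k}{s}=\lambda_s\binom{n}{s}$; the same cancellation gives the same identity.)

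With the identity in hand, I would write the fraction on the right in lowest terms as $p/q$ with $\gcd(p,q)=1$. Clearing denominators gives $q\,|\mathcal{B}|=p\,\lambda_s$. Since $\lambda_s$ is a non-negative integer and $\gcd(p,q)=1$, we conclude $q\mid\lambda_s$, so $|\mathcal{B}|=(\lambda_s/q)\,p$ is divisible by $p$, which is the numerator of the irreducible fraction, as claimed.

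There is no genuine obstacle; the only subtlety is the standard bookkeeping step that converts the rational equality into a divisibility statement via coprimality of the numerator and denominator of the reduced fraction, so the argument is essentially an immediate corollary of Lemma~\ref{lem: divisible}.
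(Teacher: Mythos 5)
Your proof is correct and matches the paper's intent: the paper states this lemma without a separate proof, presenting it as "another consequence" of Lemma~\ref{lem: divisible}, and your argument (deriving $|\mathcal{B}|=\lambda_s\cdot\frac{n(n-1)\cdots(n-s+1)}{k(k-1)\cdots(k-s+1)}$ from two applications of that lemma, then using coprimality of the reduced numerator and denominator) is exactly the derivation being left implicit. No gaps.
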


For the remainder of this section,
we let $C$ be a {binary} doubly even self-dual code of length $96$ 
with minimum weight $16$. 
In \cite{DGH1997}, the weight enumerator of $C$ is
determined to be
\begin{align}\label{eq:wt enu}
&W_{C}(x,y)
\\&=1 + (-28086 + a) y^{16} + (3666432 - 16 a) y^{20} \notag\\ \notag
&+ (366474560 + 120 a) y^{24} + (18658567680 - 560 a) y^{28} \\ \notag
&+ (422018863695 +   1820 a) y^{32} + (4552989336064 -  4368 a) y^{36} \\ \notag
&+ (24292464652992 +     8008 a) y^{40} + (65727332943360 -  11440 a) y^{44}\\ \notag
 &+ (91447307757260 + 12870 a) y^{48} + \cdots, \notag
\end{align}
where 
\begin{equation}\label{eq:abd}
28086 < a \leq 229152.
\end{equation}
Since $C$ has minimum weight $16$, $W_{C,f}(x,y)$ is divisible by $y^{16}$
for $f\in\Harm_k$ with $k\geq1$. 
By Theorem~\ref{thm:invariant} (1), this implies
\begin{equation}\label{Wcf}
W_{C,f}(x,y)\in
\begin{cases}
\CC x^2y^2P_8P_{12}^7 &\text{ if } f\in\Harm_2,\\
\CC x^3y^3P_{12}^6P_{18} &\text{ if } f\in\Harm_3,\\
\CC x^4y^4P_8^2P_{12}^6 &\text{ if } f\in\Harm_4,\\
\CC x^5y^5P_8P_{12}^5P_{18} &\text{ if } f\in\Harm_5. 
\end{cases}
\end{equation}
We first investigate the coefficients of the polynomials
appearing above. 
Note that the coefficient of $x^{96-\ell}y^\ell$
in the harmonic weight enumerator \eqref{Wcf} 
is $0$ unless $\ell\in\cL$, where the set $\cL$ is defined in \eqref{cL}.

\begin{lem}\label{lem:coef}
\begin{enumerate}
\item[{\rm (1)}] 
Let $x^2y^2P_8P_{12}^7=\sum c_{\ell}x^{96-\ell}y^\ell$. 
Then $c_{\ell}\neq 0$ for 
$\ell\in\cL\setminus\{20,76\}$.

\item[{\rm (2)}] 
Let $x^3y^3P_{12}^6P_{18}=\sum c_{\ell}x^{96-\ell}y^\ell$. 
Then $c_{\ell}\neq 0$ for 
$\ell\in\cL\setminus\{48\}$.

\item[{\rm (3)}] 
Let $x^4y^4P_8^2P_{12}^6=\sum c_{\ell}x^{96-\ell}y^\ell$. 
Then $c_{\ell}\neq 0$ for 
$\ell\in\cL$.

\item[{\rm (4)}] 
Let $x^5y^5P_8P_{12}^5P_{18}=\sum c_{\ell}x^{96-\ell}y^\ell$. 
Then $c_{\ell}\neq 0$ for 
$\ell\in\cL\setminus\{48\}$.

\end{enumerate}

\end{lem}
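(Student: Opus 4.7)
My plan is to reduce each part of Lemma~\ref{lem:coef} to a short, finite coefficient check via the explicit factorizations of $P_8$, $P_{12}$, $P_{18}$, together with an elementary parity observation.

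First, I observe that $P_8$ and $P_{12}$ are invariant under $x\leftrightarrow y$, while $P_{18}$ is antisymmetric: $P_{18}(y,x)=-P_{18}(x,y)$. Consequently, the polynomial in (1) and (3) is symmetric in $x,y$, so $c_\ell=c_{96-\ell}$, while in (2) and (4) it is antisymmetric, so $c_\ell=-c_{96-\ell}$ and in particular $c_{48}=0$. This immediately accounts for the exceptional index $\ell=48$ in (2) and (4), and reduces all remaining nonvanishing checks to $\ell\in\{16,20,24,28,32,36,40,44,48\}$.

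Next, I set $u=x^4$ and $v=y^4$. Using $P_8=u^2+14uv+v^2$, $P_{12}=(xy)^2(u-v)^2$, and $P_{18}=xy(u^2-v^2)(u^2-34uv+v^2)$, each of the four polynomials in question factors as $(xy)^{16}F_i(u,v)$, where $F_i$ is homogeneous of degree $16$:
\begin{align*}
F_1&=(u^2+14uv+v^2)(u-v)^{14},\\
F_2&=(u-v)^{12}(u^2-v^2)(u^2-34uv+v^2),\\
F_3&=(u^2+14uv+v^2)^2(u-v)^{12},\\
F_4&=(u^2+14uv+v^2)(u-v)^{10}(u^2-v^2)(u^2-34uv+v^2).
\end{align*}
For $\ell=16+4k$ with $0\le k\le 16$, the coefficient $c_\ell$ equals the coefficient of $v^k$ in the one-variable polynomial $F_i(1,v)$, so it suffices to examine $k=0,1,\dots,8$.

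The last step is to evaluate these nine coefficients of $F_i(1,v)$ for each $i$ by direct binomial expansion. In (1), the two contributions to the $v^1$-coefficient cancel as $14-14=0$, accounting for the zeros at $\ell=20$ and $\ell=76$; the remaining coefficients are short alternating sums of $\binom{14}{j}$ which one checks to be nonzero. Cases (2) and (4) already have $c_{48}=0$ from antisymmetry, and the other required coefficients are analogous short binomial sums. In (3) no symmetry forces a vanishing, so all nine coefficients must be shown nonzero by direct computation. The only conceivable obstacle is a hidden cancellation in one of the small sums for (3), but since each coefficient is an explicit integer obtained from a short binomial expansion, it is routinely verified by hand or with the computer algebra systems already employed elsewhere in the paper.
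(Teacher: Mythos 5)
Your proposal is correct and, at bottom, takes the same route as the paper: the paper's proof is simply a direct verification that the finitely many coefficients in question are nonzero, and your argument likewise ends by checking explicit integer coefficients. The added structure you supply is genuinely useful though not logically necessary --- the symmetry of $P_8,P_{12}$ and antisymmetry of $P_{18}$ under $x\leftrightarrow y$ give $c_\ell=\pm c_{96-\ell}$, which halves the verification to $\ell\le 48$ and explains \emph{a priori} why $\ell=48$ must be excluded in (2) and (4) (and, via the cancellation $14-14=0$, why $\ell=20,76$ are excluded in (1)); the substitution $u=x^4$, $v=y^4$ then reduces everything to short one-variable binomial sums, all of which do come out nonzero as you claim.
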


\begin{proof}
We can check directly that ${c_{\ell}}\neq 0$ for $\ell$
in the range specified in (1)--(4).
\end{proof}

\begin{proof}[Proof of Theorem~\ref{thm:ub}]
(1) 
By Lemma~\ref{lem:coef}~(1) and \eqref{Wcf}, we have
$W_{C,f}(x,y)=0$ for $f\in\Harm_2$. 
Since every shell of $C$ is a combinatorial $1$-design by
Theorem~\ref{thm:AM}~(1), it is also a combinatorial $2$-design
by Lemma~\ref{thm:design}.

(2) By Lemma~\ref{lem:coef}~(2) and \eqref{Wcf}, we have
$W_{C,f}(x,y)=0$ for $f\in\Harm_3$. 
Thus, every shell of $C$ is a combinatorial $3$-design
by Lemma~\ref{thm:design}. Similarly, if $t=4,5$,
every shell of $C$ is a combinatorial $t$-design.
The proof is complete if we show $t\leq4$ which, at the same time
proves the inequality in (3) for the case $\ell\neq20,48,76$. 
If $t\geq5$, then, in particular,
$C_{16}$ is a combinatorial 
$5$-design.
Since
\[\frac{96\cdot95\cdot94\cdot93\cdot92}{16\cdot15\cdot14\cdot13\cdot12}
=\frac{2\cdot19\cdot23\cdot31\cdot47}{7\cdot13},\]
Lemma~\ref{lem:divCl} implies that $|C_{16}|=a-28086$ is divisible by
$2\cdot19\cdot23\cdot31\cdot47=1273418$.
Then we have $a \geq 1301504$, contrary to \eqref{eq:abd}.

(3) First, we give the proof for the case $\ell=48$.
Suppose $t\geq8$.
We use the fundamental equation \eqref{eqn:fund}.
Take $c' \in C_{16}$ and write
$x_j :=u_j (c')$
for simplicity. 
Note that $x_j = 0$ if $j > 16$ or $j = $ odd. 
Note also that $|C_{48}|=91447307757260 + 12870 a$. 
Solving the system of equations
\[
\binom{16}{i}\lambda_i
=\sum_{j=0}^{8}\binom{2j}{i}x_{2j}
\quad(i=0,1,\dots,8),
\]
we obtain the solution $x_0 = 8112261172015/13528 + 70785 a/838736$,
and $x_0\not\in\ZZ$ 
for all integers $a\in\Z$ in the range \eqref{eq:abd}.
This is a contradiction. 

Next, we give the proof for the cases 
$\ell=20,76$.
Suppose $t\geq6$.
Then $C_{20}$ is a combinatorial $6$-design.
Since
\[\frac{96\cdot95\cdot94\cdot93\cdot92\cdot91}{20\cdot19\cdot18\cdot17\cdot16\cdot15}
=\frac{2\cdot7\cdot13\cdot23\cdot31\cdot47}{3\cdot5\cdot17},\]
Lemma~\ref{lem:divCl} implies that $|C_{20}|$ is divisible by
$2\cdot7\cdot13\cdot23\cdot31\cdot47=6099002$. Since
$6099002>3666432-16a=|C_{20}|$, this is 
a contradiction.

The inequality for the case $\ell\neq20,48,76$ has already been proved in part (2).
\end{proof}

Recall that $C$ is a {binary} doubly even self-dual code of length $96$ 
with minimum weight $16$.
For each $\ell\in\cL$, we denote by $t_\ell(C)$ the largest integer $t$
such that $C_\ell$ is a combinatorial $t$-design.
Let 
\begin{align*} 
\delta(C)&=\min\{t_\ell(C)\mid\ell\in\cL\},\\
s(C)&=\max\{t_\ell(C)\mid\ell\in\cL\}. 
\end{align*} 
In \cite{extremal design2 M-N, support design triply even code 48 M-N}, 
the first and third authors
considered the possible occurrence of $\delta(C)<s(C)$.
By Theorems \ref{thm:AM} (1) and \ref{thm:ub}, we have
\[1\leq\delta(C) \leq 4,\ 2 \leq s(C) \leq7.\]
We will show in Example~\ref{ex:known code} that, for all the known codes $C$,
$\delta(C)=1<2=s(C)$ holds. More precisely,
\begin{equation}\label{tlCmin}
t_\ell(C)=\begin{cases}
2&\text{if $\ell\in\{20,76\}$,}\\
1&\text{if $\ell\in\cL\setminus\{20,76\}$.}
\end{cases}
\end{equation}
To do this, let us begin with the
following lemma: 

\begin{lem}\label{lem:tlC}
\begin{enumerate}
\item[{\rm (1)}]
If $t_{20}(C)\geq3$, then
$a=141k+28086$ for some integer $k$ with $1 \leq k \leq 1426$.
\item[{\rm (2)}]
If $t_\ell(C)\geq2$ 
for some $\ell\in \cL\setminus\{20,76\}$, then 
$a=114k+28086$ for some integer $k$ with $1 \leq k \leq 1763$.
\end{enumerate}
\end{lem}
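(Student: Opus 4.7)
The plan is to translate each design hypothesis into a divisibility condition on a weight enumerator coefficient via Lemma \ref{lem:divCl}, and then to combine these with the explicit formula \eqref{eq:wt enu} (and, in part (2), with the universal divisibility from Corollary \ref{cor:res} (1)) to read off a congruence on $a$. The constraint $28086<a\leq 229152$ from \eqref{eq:abd} will then pin down the range of $k$ in each case.

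For (1), the hypothesis $t_{20}(C)\geq 3$ makes $C_{20}$ simultaneously a $2$-design and a $3$-design. I would compute the two irreducible fractions
\[
\frac{96\cdot 95}{20\cdot 19}=24, \qquad \frac{96\cdot 95\cdot 94}{20\cdot 19\cdot 18}=\frac{376}{3},
\]
and apply Lemma \ref{lem:divCl} to conclude that $|C_{20}|=3666432-16a$ is divisible by $\mathrm{lcm}(24,376)=1128$. Pulling out the factor of $16$ from $|C_{20}|$ reduces this to a congruence modulo $141=3\cdot 47$, and a direct check that $229152\equiv 28086\pmod{141}$ (both reduce to $27$) converts it into $a\equiv 28086\pmod{141}$. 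The identity $229152-28086=141\cdot 1426$ then gives exactly $1\leq k\leq 1426$.

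For (2), the hypothesis together with Theorem \ref{thm:ub} (1) propagates the $2$-design property from $C_\ell$ to every shell, and in particular to $C_{16}$. Lemma \ref{lem:divCl} applied with $s=2$ to $C_{16}$, using the value $\frac{96\cdot 95}{16\cdot 15}=38$, then gives $38\mid a-28086$, while Corollary \ref{cor:res} (1) supplies the independent divisibility $6\mid a-28086$. Together these force $\mathrm{lcm}(6,38)=114\mid a-28086$, and the bound \eqref{eq:abd} yields $1\leq k\leq 1763$ since $\lfloor 201066/114\rfloor=1763$.

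I do not anticipate a real obstacle beyond the modular arithmetic. The only conceptual point worth noting is why $\ell=20$ must be handled separately in (1): since Theorem \ref{thm:ub} (1) excludes $\ell\in\{20,76\}$, a $2$-design hypothesis on $C_{20}$ cannot be transported to $C_{16}$, so one is forced to exploit the divisibility of $|C_{20}|$ directly. This is exactly what produces the different modulus $141=3\cdot 47$ in (1) in contrast to $114=2\cdot 3\cdot 19$ in (2).
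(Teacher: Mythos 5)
Your proposal is correct and follows essentially the same route as the paper: both parts translate the design hypotheses into divisibility of $|C_{20}|$ (respectively $|C_{16}|$, after transporting the $2$-design property via Theorem~\ref{thm:ub}~(1)) using Lemma~\ref{lem:divCl}, and then read off the congruence and range for $a$ from \eqref{eq:wt enu} and \eqref{eq:abd}. The only cosmetic difference is that in part (2) you obtain the factor $6$ from Corollary~\ref{cor:res}~(1) rather than directly from Lemma~\ref{lem:divCl} with $s=1$, which is the same divisibility in disguise.
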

\begin{proof}
(1) 
Since
\[\frac{96\cdot95\cdot94}{20\cdot19\cdot18}
=\frac{8\cdot47}{3},\quad
\frac{96\cdot95}{20\cdot19}=8\cdot3,
\]
Lemma~\ref{lem:divCl} implies that $|C_{20}|=3666432-16a$ is divisible by
$8\cdot3\cdot47=1128$. Thus $a\equiv27\pmod{141}$.
The result then follows from \eqref{eq:abd}.

(2)
By Theorem~\ref{thm:ub} (1), 
we have $t_{16}(C)\geq2$.
Since
\[\frac{96\cdot95}{16\cdot15}=2\cdot19,\quad
\frac{96}{16}=6,\]
Lemma~\ref{lem:divCl} implies that $|C_{16}|=a-28086$ is divisible by
$2\cdot3\cdot19=114$.
The result then follows from \eqref{eq:abd}.
\end{proof}

The known examples of a {binary} 
doubly even self-dual code of length $96$ with minimum weight $16$ are 
as follows: 
\begin{Ex}\label{ex:known code}
\begin{enumerate}
\item[(a)] Feit \cite{Feit}: $a=37722$.
\item[(b)] Dougherty, Gulliver and Harada \cite{DGH1997}: $a\in\{ 37584, 37500, 37524, 37596 \}$.
\item[(c)] Dontcheva \cite{Dontcheva}: $a\in\{36918, 37884,37332 \}$.
\item[(d)] Harada, Kiermaier, Wassermann and Yorgova \cite{HKWY}: $a=37194$.
\item[(e)] Gulliver and Harada \cite[Table 5, 6]{GH2019}: there are 639 values of $a$.
\end{enumerate}
\end{Ex}

It is claimed in 
\cite{DGH1997} 
that there exists a code with $a=37598$. 
However, this value is incorrect, as it contradicts Corollary~\ref{cor:res}.
We verified that the correct value is $a=37596$.

If $C$ is one of the
codes in 
Example~\ref{ex:known code} (a)--(d), 
Lemma~\ref{lem:tlC} implies immediately
that \eqref{tlCmin} holds.

For some codes in Example~\ref{ex:known code} (e),
Lemma~\ref{lem:tlC}~(1) does not rule out larger values of $t_{20}(C)$.
In fact, all the 
1532 bordered double circulant
codes in \cite[Table~6]{GH2019}, and 
117 codes out of 4565 in \cite[Table~5]{GH2019}
satisfy the condition of Lemma~\ref{lem:tlC}~(1). We have checked, however,
by Magma that none of these codes $C$ satisfies $t_{20}(C)\geq3$.

Some codes given in \cite{GH2019} satisfies the condition of
Lemma~\ref{lem:tlC}~(2). More precisely, 
92 bordered double circulant codes in \cite[Table~6]{GH2019}, 
and 246 codes out of 4565 in \cite[Table~5]{GH2019}
satisfy the condition of Lemma~\ref{lem:tlC}~(2). We have checked, however,
by Magma that none of these codes $C$ satisfies $t_{16}(C)\geq2$.
Therefore, for codes in (e), \eqref{tlCmin} holds as well.

We note that 
there is no known example $C$ with $C_{20}=\emptyset$.

\section{Concluding remarks}\label{sec:rem}

\begin{rem}\label{rem:nonexistence}
A code or lattice satisfying the 
assumption 
of Theorem~\ref{thm:AM} 
is called near-extremal.
In \cite{MOS75}, 
it is shown 
that for sufficiently large $n$, 
there is no extremal or 
near-extremal codes (resp.\ lattices) 
with length $n$ (resp.\ rank $n$).

More precisely, 
it is shown in 
\cite{Zhang(1999)} that 
there is no extremal code with length $n$ for
\begin{center}
\begin{tabular}{rl}
Doubly even:&$n=24i\ (i\geq 154)$, $24i+8\ (i\geq 159)$, $24i+16\ (i\geq 164)$,\\
Ternary:&$n=12i\ (i\geq 70)$, $12i+4\ (i\geq 75)$, $12i+8\ (i\geq 78)$,\\
Quaternary:&$n=6i\ (i\geq 17)$, $6i+2\ (i\geq 20)$, $6i+4\ (i\geq 22)$,
\end{tabular}
\end{center}
and it is shown in 
\cite{HK} that 
there is no near-extremal code with length $n$ for
\begin{center}
\begin{tabular}{rl}
Doubly even:& $n = 24i\ (i \geq 315)$, $24i + 8\ (i \geq 320)$, $24i + 16\ (i \geq 325)$,\\
Ternary:& $n = 12i\ (i \geq 147)$, $12i + 4\ (i \geq 150)$, $12i + 8\ (i \geq 154)$,\\
Quaternary:& $n = 6i\ (i \geq 38)$, $6i + 2\ (i \geq 41)$, $6i + 4\ (i \geq 43)$.
\end{tabular}
\end{center}
Moreover, it is shown in 
\cite{JR} that 
there is no extremal lattice with rank $n>163264$. 
We do not know whether it can be proved that 
there is no near-extremal lattice with sufficiently large rank.
\end{rem}

\begin{rem}
Our proof of Theorem~\ref{thm:rising} actually shows that,
$m=4$ is the only case where we can show that $t_{4m+4}(C)\geq2$.
In fact, it can be easily verified by computer that
nontrivial vanishing of coefficients of the polynomial
\eqref{P8P12} for $1\leq m\leq 314$ (see Remark~\ref{rem:nonexistence})
occurs only for the case mentioned in our proof.
\end{rem}

\begin{rem}
Ternary and quaternary analogues 
of Theorem~\ref{thm:rising} do not exist. 
Let $C$ be a ternary (resp.\ quaternary) self-dual
code of length $12m$ (resp.\ $6m$) with 
minimum weight $3m$ (resp.\ $2m$). 
By Theorem~\ref{thm:invariant}, for $f\in\Harm_2$, 
the harmonic weight enumerator is written as follows: 
\[
W_{C,f}\in
\begin{cases}
\CC x^2y^2 p_4 g_4 g_{12}^{m-1} &\text{if $C$ is ternary},\\
\CC x^2y^2 h_2 h_6^{m-1}&\text{if $C$ is quaternary}.
\end{cases}
\]
By Lemma \ref{thm:design}, 
if the coefficient of $x^{12m-3\ell}y^{3\ell}$ 
(resp.~$x^{6m-2\ell}y^{2\ell}$)
in 
$W_{C,f}$ is zero for 
$3m\leq 3\ell\leq12m$
(resp.\ $2m\leq 2\ell\leq6m$), 
then $C_{3\ell}$
(resp.\ $C_{2\ell}$)
is a combinatorial $2$-design. However,
no such coefficient vanishes.
\end{rem}

\begin{rem}
An analogue of Theorem~\ref{thm:ub}, parts (1) and (2) seems
to hold. 
That is, for an even unimodular lattice $L$ of rank $240$,
and $t=5,7$, 
if $L_{2\ell}$ supports a spherical $t$-design for some $\ell\neq11$,
then $L_{2\ell}$ supports a spherical $t$-design for all nonzero $\ell$ with
$L_{2\ell}\neq\emptyset$.
The proof would require non-vanishing of coefficients of the modular
forms $\Delta^{10}E_4$ and $\Delta^{10}E_6$ as formal power series.
\end{rem}

\begin{rem}
The case $m=1$ in Theorem \ref{thm:AM} (4) 
was essentially used in the proof of 
the classification of 
even unimodular lattices of rank $24$ \cite{Venkov0} 
(see also \cite[Proposition 3.3 and Corollary 3.5]{Ebeling}). 
\end{rem}

\section*{Acknowledgments}
The authors are supported by JSPS KAKENHI (17K05155, 18K03217). 


\end{document}